\definecolor{codegray}{gray}{0.97}
\definecolor{codeblue}{rgb}{0.1,0.1,0.6}
\definecolor{codegreen}{rgb}{0.0,0.5,0.0}
\definecolor{codepurple}{rgb}{0.5,0.0,0.5}
\definecolor{codered}{rgb}{0.6,0.1,0.1}
\definecolor{codeorange}{rgb}{0.8,0.3,0.0}
\tiny\color{gray},
\DeclareMathOperator{\constantterm}{ct}
\newcommand{\ct}[1]{\constantterm\left[#1\right]}
\theoremstyle{plain}
\newtheorem{theorem}{Theorem}[section]
\newtheorem{conjecture}[theorem]{Conjecture}
\newtheorem{proposition}[theorem]{Proposition}
\theoremstyle{definition}
\newtheorem{definition}[theorem]{Definition}
\title{Automatic Bounds on Constant Term Sequences Modulo Primes}
\author{Justin Offutt\thanks{Undergraduate student researcher. This work was conducted as part of the LEMMA research group at Indiana University.}}
\affil{Department of Mathematics, Indiana University\\Bloomington, IN, USA\\jcoffutt@iu.edu}
\date{\today}
\begin{document}
	
	\maketitle
	
	\begin{abstract}
		This paper provides counterexamples to a previously conjectured upper bound on the first index \( n_0 \) at which a zero appears in constant term sequences of the form \( A_p(n) := \operatorname{ct}(P^n) \bmod p \), where \( P(t) \in \mathbb{Z}[t, t^{-1}] \). The conjecture posited that the first zero must occur at some index \( n_0 < p^{\deg(P)} \) for univariate Laurent polynomials. We prove an automaton state-based bound for univariate polynomials \( n_0 < p^{\kappa(P, p)} \), where \( \kappa(P, p) \) is the automaticity of \( (A_p(n))_{n \geq 0} \) over \( \mathbb{F}_p \). We support our theoretical results with randomized experiments on low-degree Laurent polynomials, and propose the \( \kappa(P, p) \)-based bound as a practical alternative to the general worst-case bound \( B_{P,p} = p^{p^{(2\deg(P)-1)^r}} \) arising from the Rowland–Zeilberger construction.
	\end{abstract}
	
	\section{Introduction}
	Constant term sequences, defined by taking the constant term of powers of Laurent polynomials, generally display intricate algebraic and modular structure. These sequences can often contain combinatorial information and arise naturally in areas such as analytic combinatorics and number theory. Classical results, such as Christol's theorem \cite{christol} and the work of Dwork \cite{dwork}, reveal deep connections between algebraic functions and their reductions modulo primes, hinting that constant term sequences mod \( p \) may inherit automata-theoretic structure. Motivated by these connections, we study sequences of the form
	\[
	A_p(n) := \operatorname{ct}(P^n) \mod p,
	\]
	where \( P(t) \in \mathbb{Z}[t, t^{-1}] \) is a Laurent polynomial. Our focus is on understanding when these sequences first attain the value zero modulo \( p \), and how the algebraic and automata-theoretic properties of \( P \) influence this behavior.
	
	The following empirical analysis on these sequences leverages a Sage library developed by Nadav Kohen, which provides tools for manipulating and computing constant term sequences. The source code is publicly available at \texttt{\url{https://github.com/nkohen/ConstantTermSequences}}, and the implementation developed specifically for our experiments is available at \texttt{\url{https://github.com/jcbopit/CTSExperiments/}}. 
	
	We now outline the structure of the paper. In Section 2, we introduce basic definitions and provide a motivating example. Section 3 describes our experimental methodology. Section 4 presents counterexamples to the conjectured bound. Section 5 proposes an alternative automaton-based bound. We conclude with a discussion of implications and directions for future work.

	\section{Background and Definitions}
	
	\begin{definition}
		Let \( P(x), Q(x) \) be multivariate Laurent polynomials in variables \( x = (x_1, x_2, \dots, x_m) \). The \emph{constant term sequence} \( A(n) \) is the sequence indexed by \( n \in \mathbb{N} \), defined by
		\[
		A(n) := \text{ct}\left[P(x)^n Q(x)\right],
		\]
		where \( \text{ct}[\cdot] \) denotes the \emph{constant term} operator which extracts the coefficient of the monomial \( x_1^0 x_2^0 \cdots x_m^0 \) in the expansion of \( P(x)^n Q(x) \). This defines an infinite sequence:
		\[
		(A(n))_{n \geq 0} = A(0), A(1), A(2), \dots.
		\]
	\end{definition}
	
	\subsection{A Motivating Example}
	
	An example of a constant term sequence arises from the Laurent polynomial \( P(x, y) = x^{-1} + x + y^{-1} + y \). The infinite sequence \( (A(n))_{n \geq 0} \) defined by:
	\[
	A(n) = \text{ct}\left[(x^{-1} + x + y^{-1} + y)^n\right] = 1,\;0,\;4,\;0,\;36,\;0,\;400,\;0,\;4900,\; \dots
	\]
	counts the number of two-dimensional lattice paths of length \( n \) composed of steps in the four cardinal directions: left (\( L = x^{-1} \)), right (\( R = x \)), down (\( D = y^{-1} \)), and up (\( U = y \)), that begin and end at the origin. This is only possible when the number of left and right steps are equal (L = R), and the number of up and down steps are equal (U = D), which explains why the constant term, the number of ways to return to the origin, is zero for odd indices and nonzero for even indices. This sequence is catalogued in the OEIS as entry \href{https://oeis.org/A005568}{A005568}.

	For example, suppose we want to know how many paths return to the origin in exactly \( n = 6 \) steps:

	\begin{align*}
		A(n) &= \text{ct}\left[(x^{-1} + x + y^{-1} + y)^n\right] = 1,\;0,\;4,\;0,36,\;0,\;\textbf{400},\; \dots \\
		A(6) &= \textbf{400}
	\end{align*}
	
	The $6$-th index of the constant term sequence $A(n)$ encodes that there are \emph{400} ways to leave the origin and return to it using 6 steps in the cardinal directions!
	
	Below is a diagram of a few such lattice paths of length \( n = 6 \).

\begin{center}
	\begin{tikzpicture}[scale=1, line width=1.5pt, >=Stealth]

		\draw[step=1cm, gray!30, thin] (-3.5,-3.5) grid (3.5,3.5);

		\draw[->, thick] (-3.6,0) -- (3.8,0) node[right] {\footnotesize \(x\)};
		\draw[->, thick] (0,-3.6) -- (0,3.8) node[above] {\footnotesize \(y\)};

		\node[circle,fill=black,inner sep=2pt] at (0,0) {};
		\node[below right] at (0,0) {\footnotesize (0,0)};
		
		\draw[->, red, shorten >=2pt, shorten <=2pt]
		(0,0) -- (1,0) -- (1,1) -- (2,1) -- (2,0) -- (1,0) -- (0,0);
		\draw[->, blue, shorten >=2pt, shorten <=2pt]
		(0,0) -- (0,1) -- (0,2) -- (-1,2) -- (-1,1) -- (-1,0) -- (0,0);
		\draw[->, green, shorten >=2pt, shorten <=2pt]
		(0,0) -- (0,-1) -- (-1,-1) -- (-1,-2) -- (0,-2) -- (0,-1) -- (0,0);

		\begin{scope}[shift={(4.5,2.5)}]
			\draw[gray!50, rounded corners] (-0.3,-2) rectangle (4.35,0.5);
			\node[anchor=west, red]   at (0, 0) {\rule{10pt}{1.5pt} \quad R, U, R, D, L, L};
			\node[anchor=west, blue]  at (0, -0.6) {\rule{10pt}{1.5pt} \quad U, U, L, D, D, R};
			\node[anchor=west, green] at (0, -1.4) {\rule{10pt}{1.5pt} \quad D, L, D, R, U, U};
			\node at (1.75, 0.75) {\footnotesize \textbf{Path Legend}};
		\end{scope}
		
	\end{tikzpicture}
\end{center}

\subsection{Automata and Base-\( p \) Representation of Constant Term Sequences}

Constant term sequences modulo a prime \( p \) are closely connected to automata theory via Christol's theorem \cite{christol}. This classical result states that a sequence \( a(n) \) over \( \mathbb{F}_p \) is \( p \)-automatic if and only if its generating function is algebraic over \( \mathbb{F}_p(x) \).

Since constant term sequences of the form
\[
A_p(n) = \text{ct}[P^n] \mod p
\]
have rational generating functions, and hence are algebraic, Christol’s theorem guarantees that they are \( p \)-automatic. That is, there exists a finite automaton which, given the base-\( p \) expansion of \( n \) (read from least to most significant digit), computes \( A_p(n) \).

\medskip

Concretely, a finite automaton processes the digits of \( n \) one by one, transitioning between a finite set of states based on each digit, and then outputs the value of \( A_p(n) \) according to the final state reached.

\medskip

To illustrate this, we present an automaton over \( \mathbb{F}_2 \) computing a simple 2-automatic sequence:
\[
a(n) = 1,\,0,\,0,\,1,\,0,\;\dots
\]

\usetikzlibrary{automata, positioning}

\begin{figure}[H]
	\centering
	\begin{tikzpicture}[shorten >=1pt, node distance=3.2cm, on grid, auto, 
		every state/.style={draw, circle, minimum size=1.2cm, font=\normalsize}]

		\node[state, initial, accepting, label=above:{Outputs 1}] (q0) {$q_0$};
		\node[state, label=above:{Outputs 1}] (q1) [right of=q0] {$q_1$};
		\node[state, label=below:{Outputs 0}] (q2) [below of=q0] {$q_2$};
		\node[state, label=right:{Outputs 0}] (q3) [right of=q2] {$q_3$};

		\path[->]
		(q0) edge[bend left=20] node {0} (q1)
		edge[bend left=20] node {1} (q2)
		(q1) edge[bend left=20] node {0} (q0)
		edge[bend left=20] node {1} (q3)
		(q2) edge[bend left=20] node {0} (q3)
		edge[bend left=20] node {1} (q0)
		(q3) edge[loop below] node {0,1} ();
	\end{tikzpicture}
	\caption{Finite automaton computing a 2-automatic sequence \( a(n) \in \mathbb{F}_2 \). Input is the base-2 expansion of \( n \), read from least to most significant digit.}
	\label{fig:base2fsm}
\end{figure}
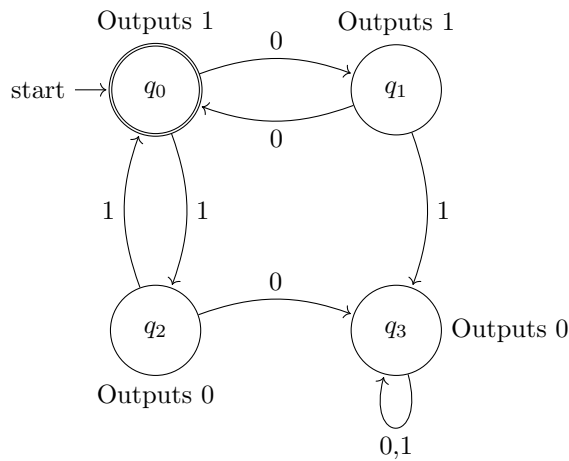

\medskip

For example:
\begin{itemize}
	\item \textbf{\( n = 0 \) (binary: \texttt{0})}: start at \( q_0 \), read \( 0 \to q_1 \), output \( 1 \).
	\item \textbf{\( n = 1 \) (binary: \texttt{1})}: start at \( q_0 \), read \( 1 \to q_2 \), output \( 0 \).
	\item \textbf{\( n = 2 \) (binary: \texttt{10})}: start at \( q_0 \), read \( 0 \to q_1 \), then \( 1 \to q_3 \), output \( 0 \).
	\item \textbf{\( n = 3 \) (binary: \texttt{11})}: start at \( q_0 \), read \( 1 \to q_2 \), then \( 1 \to q_0 \), output \( 1 \).
	\item \textbf{\( n = 4 \) (binary: \texttt{100})}: start at \( q_0 \), read \( 0 \to q_1 \), then \( 0 \to q_0 \), then \( 1 \to q_2 \), output \( 0 \).
\end{itemize}

Thus, Christol’s theorem justifies the use of automata to study constant term sequences modulo \( p \), and the size of the automaton (\( \kappa(P,p) \)) plays a central role in bounding the location of the first zero.

	\subsection{Theorem and Conjecture on the First Zero of $A_p(n)$}
	The following proposition, extracted as Proposition 6 from Section 4 of Kohen \cite{kohen}, states an upper bound on the position of the first zero in the sequence, that is, the first index \( n_0 \) such that \( A_p(n_0) = 0 \in \mathbb{F}_p \).
	\begin{proposition}[Kohen, Proposition 6, Section 4] \label{check_bound}
		Let $P$ be any Laurent polynomial and $p$ be prime. There exists a $B_{P,p} \in \mathbb{N}$, depending on $p$ and $\deg(P)$, such that if there exists some $n \in \mathbb{N}$ with $p \mid \ct{P^n}$, then there exists an $n_0 \in \mathbb{N}$ with $n_0 < B_{P,p}$ such that $p \mid \ct{P^{n_0}}$.
	\end{proposition}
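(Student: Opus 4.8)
The plan is to show that the sequence $(A_p(n))_{n\ge 0}$ with $A_p(n) = \ct{P^n} \bmod p$ is $p$-automatic, and then extract the bound $B_{P,p}$ from the size of the automaton that computes it. The key point is that "has a zero somewhere" for an automatic sequence is equivalent to "has a zero at some index whose base-$p$ representation is no longer than the number of states," so the bound will be $p$ raised to a power controlled by the state count.

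First I would set up the algebraicity of the generating function: since $P(t)\in\mathbb{Z}[t,t^{-1}]$, the series $F(x) = \sum_{n\ge 0} \ct{P^n}\, x^n$ is the diagonal of a rational function (namely $\ct{P^n}$ is obtained by a constant-term extraction from $\frac{1}{1-xP(t)}$), and diagonals of rational functions are algebraic over $\mathbb{F}_p(x)$ after reduction mod $p$ (this is the Furstenberg/Deligne-style fact underlying the discussion after Christol's theorem in the excerpt). One can also invoke the Rowland--Zeilberger construction directly, which produces explicitly a finite set of sequences closed under the Cartier-type operators $\Lambda_r(a)(n) = a(pn+r)$ for $r = 0,\dots,p-1$, with the cardinality of that set bounded by an explicit function of $p$ and $\deg(P)$ --- this is exactly the source of the crude bound $B_{P,p} = p^{p^{(2\deg(P)-1)^r}}$ mentioned in the abstract. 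Second, having such a finite set $S$ of sequences (the "kernel"), I would observe that there is a deterministic finite automaton whose states are the elements of $S$, whose transition on digit $r$ sends $a\mapsto \Lambda_r(a)$, and whose output on state $a$ is $a(0)$; feeding in the base-$p$ digits of $n$ from least to most significant lands in the state $\Lambda_{d_k}\cdots\Lambda_{d_0}(A_p)$, whose value at $0$ is $A_p(n)$. Let $N = |S|$ be the number of states.

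Third, I would run the pigeonhole/reachability argument: if $A_p(n_0)=0$ for some $n_0$, consider a shortest base-$p$ word $w$ (read least-significant-first) that drives the automaton from the start state to an accepting-is-zero state; if $|w| > N$ then some state repeats along the computation path, and excising the loop between the two occurrences yields a strictly shorter word reaching the same final state, contradicting minimality unless we must be careful that deleting a block of digits still corresponds to a natural number (it does: removing a contiguous block of base-$p$ digits of $n$ produces another nonnegative integer, and leading zeros are harmless since the start state is reached by reading $0$). Hence there is such a word of length at most $N$, so the corresponding index satisfies $n_0 < p^{N}$, and we may take $B_{P,p} = p^{N}$, which depends only on $p$ and $\deg(P)$ through the bound on $N$.

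The main obstacle I expect is the second step done carefully: producing a genuinely \emph{finite} kernel with a cardinality bound that depends only on $p$ and $\deg(P)$ (not on $P$ itself), and checking that the constant-term operation intertwines correctly with the digit-reading operators $\Lambda_r$ so that the automaton really outputs $A_p(n)$. Establishing closure of the kernel under $\Lambda_0,\dots,\Lambda_{p-1}$ is where the Rowland--Zeilberger machinery (or, alternatively, a direct argument via the Frobenius and a presentation of the algebraic function $F$) does the real work; everything after that --- the automaton construction and the loop-excision bound --- is routine finite-automata bookkeeping. A secondary subtlety worth a sentence is the role of $A_p(0) = \ct{P^0} = \ct{1} = 1 \ne 0$, which guarantees the start state is not itself a zero-output state, so the shortest witnessing word is nonempty and the bound is stated cleanly as a strict inequality.
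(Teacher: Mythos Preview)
Your proposal is correct and follows essentially the same route as the paper: invoke the Rowland--Zeilberger automaton for $A_p(n)$, bound its number of states $N$ by an explicit function of $p$ and $\deg(P)$, and then observe that any reachable zero-output state is reachable by a word of length less than $N$, giving $n_0 < p^N$. The paper's proof is a terse three-sentence version of exactly this argument, citing the state-count bound $N \le p^{(2\deg(P)-1)^r}$ directly rather than deriving it; your additional discussion of kernel closure and the loop-excision pigeonhole just unpacks what the paper takes as known from the Rowland--Zeilberger construction.
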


	\begin{proof}
		The Rowland-Zeilberger automaton for $\ct{P^n}\bmod p$ contains at most $p^{\vert T\vert} = p^{(2m+1)^r} = p^{(2\cdot\deg(P)-1)^r}$ states. If one of the states has output $0$, then there must be a path to it from the starting state of length less than the number of states. Thus, if we let $B_{P,p} = p^{p^{(2\cdot\deg(P)-1)^r}}$, then there must be a $0$ in our sequence for some $n_0<B_{P,p}$.
	\end{proof}
	
	However, this bound of $p^{p^{(2\cdot\deg(P)-1)^r}}$ is the theoretical worst case. In Kohen's \cite{kohen} brief computer experimentation in one variable, no sequence or prime was found violating the following suggested bound:
	
	\begin{conjecture}\label{conj}
		Proposition \ref{check_bound} holds for $B_{P,p} = p^{\deg(P)}$ when $P$ is univariate.
	\end{conjecture}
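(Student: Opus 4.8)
The plan is to attempt Conjecture~\ref{conj} by sharpening, in the univariate case, the generic bound $|T|\le p^{2\deg(P)-1}$ on the number of Rowland--Zeilberger states --- and, anticipating that the sharpening needed exceeds what the structure permits, to run a targeted computer search for a counterexample in parallel. Recall the shape of the construction: for $P(t)\in\mathbb{Z}[t,t^{-1}]$ a state is a Laurent polynomial $R\in\mathbb{F}_p[t,t^{-1}]$ supported in a fixed exponent window $W$ (generically of length $2\deg(P)-1$), the start state is $R=1$, the output of state $R$ is its constant term $\ct{R}\in\mathbb{F}_p$, and reading a base-$p$ digit $j$ (least significant first) sends $R$ to a new section polynomial $R^{(j)}$ obtained by multiplying $R$ by $P^j$ and then applying a $p$-section operator that divides exponents by $p$. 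Since $P^j$ spans a window of length $O(p\deg(P))$, multiplication by it widens the support while the subsequent section contracts it by a factor of $p$, so the first move would be to identify the unique window closed under all of the maps $R\mapsto R^{(j)}$ and thereby bound the number $N$ of states reachable from $R=1$ far below the generic $p^{2\deg(P)-1}$.

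The second move converts a bound on $N$ into a bound on the first zero. If some reachable state outputs $0$, then a shortest path to it from the start state reads at most $N-1$ digits, which exhibits a zero of $A_p$ at an index $n_0<p^{N-1}$ (the start state outputs $\ct{1}=1\neq 0$, so this path has positive length). To land below $p^{\deg(P)}$ one therefore needs $N\le\deg(P)+1$. Put differently, through this argument Conjecture~\ref{conj} is equivalent to the assertion that for univariate $P$ the reachable Rowland--Zeilberger automaton has at most $\deg(P)+1$ states --- a bound linear in $\deg(P)$, far below the generic $p^{2\deg(P)-1}$ (which, via the same path-length step, only yields Proposition~\ref{check_bound}'s worst case $n_0<p^{p^{2\deg(P)-1}}$).

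That linear bound on the number of reachable states is where I expect the argument to collapse, and it is the main obstacle. There is no structural reason that iterating ``multiply by $P^j$, then take a $p$-section'' should yield only $\deg(P)+1$ distinct states from $R=1$: the stable window $W$ and the count of reachable polynomials inside it depend on $p$, on which digits occur, and on cancellations modulo $p$, and $W$ can plausibly settle at a length strictly between $\deg(P)+1$ and $2\deg(P)-1$ while still hosting many more than $\deg(P)+1$ reachable residue patterns --- in which case $n_0$ can overshoot $p^{\deg(P)}$. So the realistic outcome of this plan is the search: sampling low-degree Laurent polynomials $P$ and small primes $p$, compute $A_p(n)$ for $n$ running up to and past $p^{\deg(P)}$, record the first zero, and look for a pair $(P,p)$ with $n_0\ge p^{\deg(P)}$. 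I expect this to succeed, refuting Conjecture~\ref{conj} and motivating the replacement of $p^{\deg(P)}$ by the automaticity-based quantity $p^{\kappa(P,p)}$.
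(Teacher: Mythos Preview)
Your proposal is correct and lands on essentially the same approach as the paper: the conjecture is false, and the paper refutes it by a randomized computer search over low-degree univariate Laurent polynomials (finding, e.g., $P(t)=4t^2+6t+1+6t^{-1}$ over $\mathbb{F}_7$ with first zero at $n_0=225>49=7^2$, plus ten further counterexamples), and then proposes the automaticity bound $n_0<p^{\kappa(P,p)}$ as the replacement --- exactly the outcome you anticipate. Your state-count heuristic explaining \emph{why} a proof should fail (that it would force $N\le\deg(P)+1$ reachable states, which is structurally implausible) is a useful piece of motivation the paper does not spell out; one small quibble is that ``equivalent'' overstates the reduction, since having $N>\deg(P)+1$ reachable states does not by itself force $n_0\ge p^{\deg(P)}$ --- a zero-outputting state could still sit close to the start.
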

	
	\ref{conj} leads to the focus of the following section where we attempt to computationally discover a polynomial $P$ which takes on a $0$ at an $n_0 > p^{deg P} = B_{P,p}$.
	
	* Note: the above proof assumes $\exists  n\in\mathbb{N} \; \text{such that} \; \text{ct}\left[P^n\right] \equiv 0 \pmod{p}$ as a premise to begin with. Experimentally it appears that most of the constant term sequences we are concerned with do have such an $n_0$ leading to $A_p(n_0) = 0 $, but the current literature does not provide a way to determine the existence of this $n_0$, which could be an exciting area of exploration. \\

	\section{Methodology}
	
	The overarching goal of the following experiment is to explore the behavior of constant term sequences modulo primes and the locations of their first zeroes. Rather than exhaustively analyzing infinitely many polynomials and their associated constant term sequences $A_p(n)$ (which unfortunately is computationally impossible), we implement a simple randomized search approach that can be repeated with various parameters and prime ranges. This reduces the search space to a manageable subset of polynomials and makes the experiment feasible.
	
	To begin, we construct a function to generate Laurent polynomials of a specified degree with randomly sampled coefficients.

	\begin{lstlisting}[language=Python, caption={Random Laurent Polynomial Generator in Sage}]
	def generate_laurent_polynomial(prime, degree):
		R.<t> = LaurentPolynomialRing(GF(prime), 1)
		coefficients = [random.randint(1, prime - 1) for _ in range(2 * degree + 1)]
		exponents = list(range(-degree, degree + 1))
		return sum(c * t^e for c, e in zip(coefficients, exponents))
	\end{lstlisting}

	 Each polynomial was randomly generated in the Laurent polynomial ring \( \mathbb{F}_p[t, t^{-1}] \) for a fixed prime \( p \). Coefficients \( a_i \in \mathbb{F}_p^\times \) (i.e., nonzero elements of the field) were selected uniformly at random from the set \( \{1, 2, \dots, p-1\} \). Each generated polynomial had the form
	 \[
	 P(t) = \sum_{i = -d}^{d} a_i t^i,
	 \]
	 where \( d \) is a fixed degree bound, and all \( 2d + 1 \) coefficients \( a_i \) are nonzero.

	Using this function, we developed a simple program to perform preliminary experiments and investigate trends. For each randomly generated polynomial \( P \), we used Kohen’s \texttt{LinRep} module to compute the first index \( n_0 \) at which the constant term sequence \( A_p(n) \) satisfies \( A_p(n_0) = 0 \) over a range of primes \( p \). If such an index exists, it is referred to as the \emph{shortest zero} and recorded. If no zero occurs (i.e., the associated automaton has no zero-outputting state), the corresponding prime \( p \) is noted separately.
	
	For each polynomial, we plotted the shortest zero values against the modulus \( p \), using blue markers for observed shortest zeros and red stars for primes with no zero. These plots allowed visual inspection of trends. While initial plots appeared largely unstructured, further examination revealed that for some polynomials, the shortest zero values grew approximately linearly or quadratically with \( p \). However, such patterns were not consistent across all examples, preventing any broad generalization.
	
	\vspace{1em}
	The figure below presents a "quilt" of plots for 25 randomly generated Laurent polynomials of degree at most 4:

	\vspace*{-1em}
	\begin{figure}[H]
		\centering
		\includegraphics[width=\textwidth]{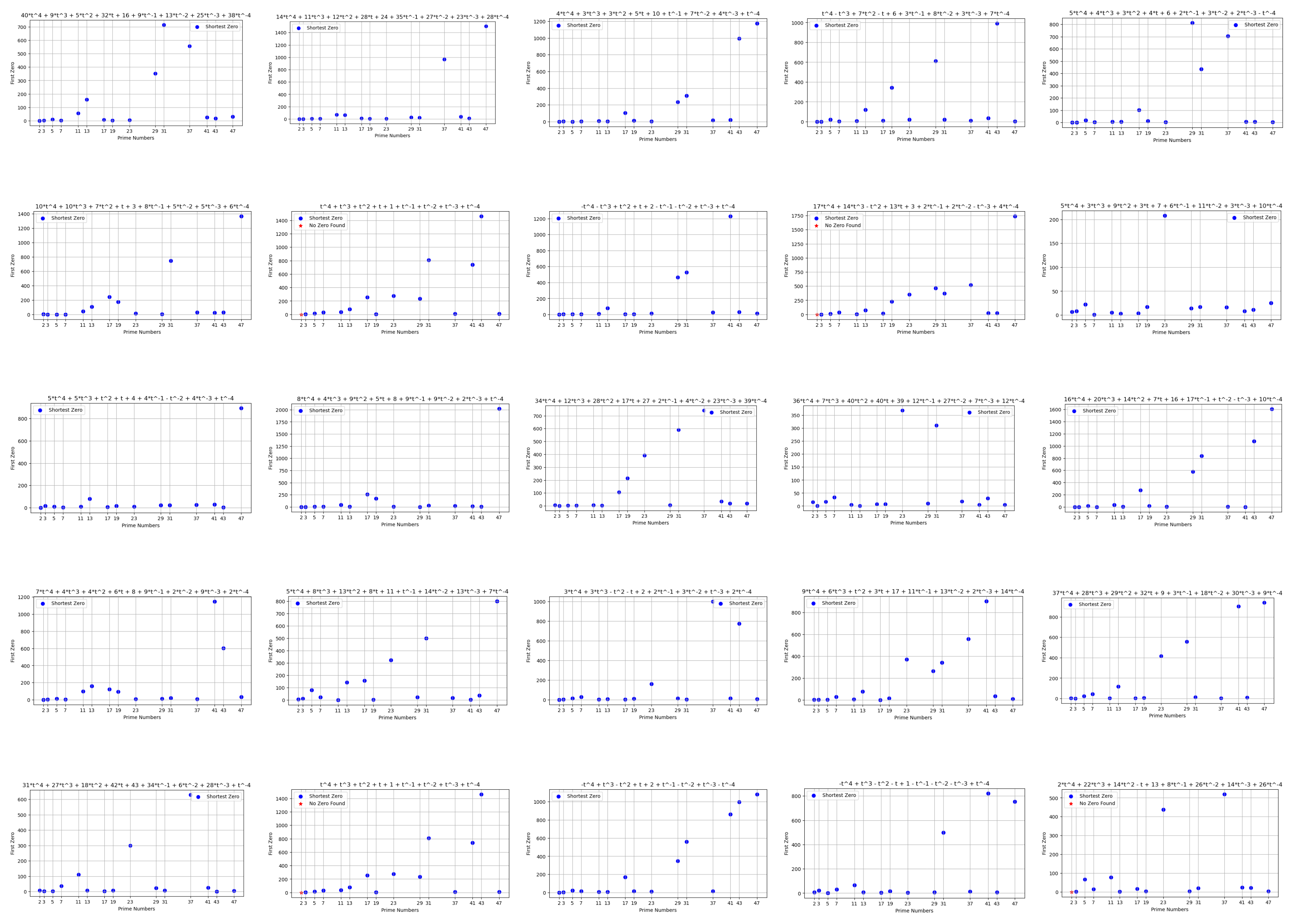}
		\caption{\small{Shortest zero values across primes for various randomly generated Laurent polynomials. Each subplot represents a different polynomial, with prime numbers on the x-axis and the first zero index on the y-axis.}}
		\label{fig:quilt}
	\end{figure}

	During extensive experimentation, we observed that degree-2 univariate Laurent polynomials often exhibited erratic shortest-zero behavior. This made them an excellent search space for potential counterexamples to Proposition~\ref{check_bound}, particularly because bound violations tended to occur at small primes, allowing fast and inexpensive computation across many polynomials.

 	\clearpage
 	
	To systematically explore this search space, we implemented the following experimental logic:
	\begin{lstlisting}[language=Python, caption=Main Loop Logic]
	for _ in range(num_polynomials):
		primes_list = list(primerange(2, max_prime_mod))
		prime = random.choice(primes_list)
		poly, degree = generate_laurent_polynomial(prime, degree)
		
		shortest_zeros = []
		none_primes = []
		valid_primes = []
		violations = []
		violation_primes = []
		
		for p in primes_list:
			try:
				R.<t> = LaurentPolynomialRing(GF(p), 1)
				P = R(poly)
				Q = R(1)
				sz = LinRep.compute_shortest_zero(P, Q, p, 50000)
				
				if sz is not None:
					if sz > (p ** degree):
						violations.append(sz)
						violation_primes.append(p)
					else:
						valid_primes.append(p)
						shortest_zeros.append(sz)
				else:
					none_primes.append(p)
			
			except Exception as e:
				print(f"Error for prime {p}: {e}")
	\end{lstlisting}

	In order to search for examples that break $B_{P,p} = p^{\deg(P)}$, the main loop takes a polynomial's constant term sequence $ct [ P(t)^n] \mod p$ and computes its shortest zero, \texttt{sz}, for each prime $p$ up to a user-defined bound (i.e compute the shortest zero for $0 \leq p \leq 7$). In each computation, the main loop checks if $A_p(n_0) = 0$ when $n_0 > p^{\deg P}$ . If it is, it is considered a \texttt{violation}, otherwise, it is considered a \texttt{valid\_prime}. Doing this for each generated polynomial leads to plots of the first zero positions in the associated constant term sequences.

	\section{Results}
	
	In this section, we present an explicit counterexample to conjecture \ref{conj}, which posits that for any univariate Laurent polynomial \( P \in \mathbb{F}_p[t, t^{-1}] \) of degree \( d \), the first zero of the constant term sequence \( A_p(n) = \ct{P^n} \bmod p \) occurs at some \( n_0 < p^{\deg(P)} \).
	
	To find such an example, we utilized the program logic outlined in section 3 to conduct a randomized search over degree-2 Laurent polynomials until we found a violating polynomial.
	
	\subsection{An Explicit Counterexample to Conjecture~\ref{conj}}
	
	Let \( P(t) = 4t^2 + 6t + 1 + 6t^{-1} + 0t^{-2} \in \mathbb{F}_7[t, t^{-1}] \) be a univariate Laurent polynomial of degree 2 over \( \mathbb{F}_7 \), obtained by reducing coefficients of \( 32t^2 + 13t + 1 + 27t^{-1} + 35t^{-2} \) modulo 7. 
	Computing the associated constant term sequence \( A_7(n) = \operatorname{ct}(P(t)^n) \bmod 7 \), we find that the first zero occurs at \( n_0 = 225 \), whereas Conjecture~\ref{conj} predicts it must occur at some \( n_0 < 7^2 = 49 \).
	Since \( 225 > 49 \), this provides a counterexample to Conjecture~\ref{conj}.
	
	\vspace{1em}
	
	\begin{figure}[H]
		\centering
		\includegraphics[width=0.5\textwidth]{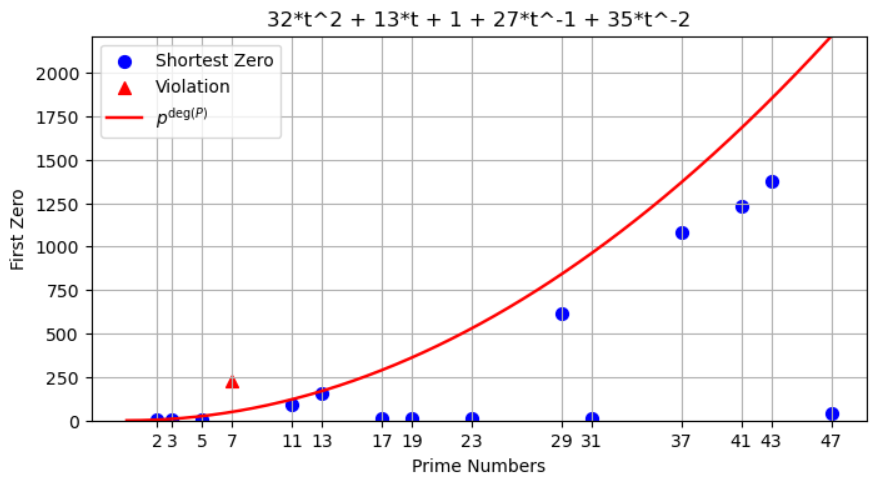}
		\caption{Shortest zero values across primes for \( P(t) = 32t^2 + 13t + 1 + 27t^{-1} + 35t^{-2} \) with violations marked.}
		\label{fig:counterexample}
	\end{figure}

	\subsection{Ten Additional Counterexamples to Conjecture~\ref{conj}}
	
	\noindent
	 Reduced polynomials over \(\mathbb{F}_p\) yielding counterexamples to Conjecture~\ref{conj}.

	\begin{align*}
		P_1(t) &= t^2 + 2t + 3 + 10t^{-1} + 2t^{-2} \in \mathbb{F}_{11} \\
		P_2(t) &= 3t^2 + 0t + 2 + 3t^{-1} + 3t^{-2} \in \mathbb{F}_5 \\
		P_3(t) &= t^2 + t + 1 + t^{-1} + 2t^{-2} \in \mathbb{F}_3 \\
		P_4(t) &= 4t^2 + 0t + 4 + 4t^{-1} + t^{-2} \in \mathbb{F}_5 \\
		P_5(t) &= 0t^2 + 3t + 3 + 6t^{-1} + 1t^{-2} \in \mathbb{F}_7 \\
		P_6(t) &= t^2 + 0t + 4 + 4t^{-1} + 9t^{-2} \in \mathbb{F}_{11} \\
		P_7(t) &= 0t^2 + 3t + 1 + 0t^{-1} + 4t^{-2} \in \mathbb{F}_5 \\
		P_8(t) &= 2t^2 + 2t + 1 + 2t^{-1} + t^{-2} \in \mathbb{F}_3 \\
		P_9(t) &= 4t^2 + t + 2 + 4t^{-1} + t^{-2} \in \mathbb{F}_5 \\
		P_{10}(t) &= t^2 + 2t + 1 + 2t^{-1} + 2t^{-2} \in \mathbb{F}_3
	\end{align*}

	\section{Automatic Bound on First Zero}
	
	\begin{proposition} \label{auto_bound}
		Let \( P(t) \in \mathbb{F}_p[t, t^{-1}] \) be a univariate Laurent polynomial, and define the sequence \( A_p(n) = \ct{P(t)^n} \bmod p \). Let \( \kappa(P, p) \in \mathbb{N} \) denote the number of states in a minimal deterministic finite automaton that computes \( A_p(n) \) from the base-\( p \) representation of \( n \). Then, if the sequence \( A_p(n) \) ever takes the value \( 0 \), the index \( n_0 \) of the first zero satisfies
		\[
		p^{\kappa(P, p)-1} \leq n_0 < p^{\kappa(P, p)}.
		\]
	\end{proposition}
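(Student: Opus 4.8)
The plan is to prove the two inequalities by rather different arguments. By Christol's theorem $A_p$ is $p$-automatic, so fix a minimal DFAO $\mathcal{A} = (Q,\delta,s,\tau)$ that reads base-$p$ digits from least to most significant, with $|Q| = \kappa(P,p)$ and $\tau(\delta^*(s,w)) = A_p(n)$ whenever $w$ is a base-$p$ expansion of $n$. I would first record the two normalization facts I rely on: $\tau(s) = A_p(0) = \operatorname{ct}(P^0) = 1 \neq 0$, and the output is unchanged by appending high-order zero digits, so expansions may be freely padded.

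For the upper bound $n_0 < p^{\kappa}$ the argument is the Rowland--Zeilberger pigeonhole of Proposition~\ref{check_bound}, but run against the true state count instead of the crude bound $p^{(2\deg P - 1)^r}$: since $A_p$ attains the value $0$, some state $q$ with $\tau(q)=0$ is reachable from $s$, and a shortest path $s \to q$ in the transition graph repeats no state, hence has length at most $\kappa-1$. Reading its edge labels as the low-order digits $d_0,\dots,d_{\kappa-2}$ of an integer $m = \sum_i d_i p^i < p^{\kappa-1} \le p^\kappa$ gives $A_p(m) = \tau(q) = 0$, so $n_0 \le m < p^\kappa$.

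For the lower bound $n_0 \ge p^{\kappa-1}$, write $n_0 = \sum_{i=0}^{\ell-1} d_i p^i$ with $d_{\ell-1}\neq 0$, so that $p^{\ell-1}\le n_0 < p^\ell$ and it suffices to show $\ell \ge \kappa$. I would track the run $s = q_0, q_1, \dots, q_\ell$ on this expansion, $q_{i+1} = \delta(q_i,d_i)$. Each proper prefix $d_0\cdots d_{i-1}$ is an expansion of $n_0 \bmod p^i < n_0$, so minimality of $n_0$ forces $\tau(q_i) = A_p(n_0 \bmod p^i) \neq 0$ for $i<\ell$, while $\tau(q_\ell)=0$; hence $q_\ell \notin \{q_0,\dots,q_{\ell-1}\}$. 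The substantive step is to upgrade this to the claim that the run $q_0,\dots,q_\ell$ actually exhausts $Q$, so that $\ell+1 \ge \kappa$, and then to sharpen to $\ell \ge \kappa$ using that a $0$-output state can only occur at the terminal position of the run. I expect to argue the exhaustion claim by contradiction through the $p$-kernel description of the states, $Q \leftrightarrow \{\, n \mapsto A_p(p^j n + t) : j\ge 0,\ 0 \le t < p^j \,\}$: if some state were never visited along this run, one could prune/quotient $\mathcal{A}$ into a strictly smaller automaton still computing $A_p$ correctly (it agrees on all indices below $n_0$, where the outputs are nonzero, and the merged behaviour is consistent beyond), contradicting minimality of $\kappa$.

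The main obstacle is precisely this last step: converting minimality of $\mathcal{A}$ into a structural statement about the run on the first zero, and carrying out the leading-zero bookkeeping in the LSD-first convention carefully enough that the count lands on exactly $\kappa$ rather than $\kappa \pm 1$. The upper bound and the normalization lemmas, by contrast, are routine.
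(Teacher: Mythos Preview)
Your upper-bound argument is correct and in fact proves more than you state: a shortest path from $s$ to a zero-output state has length at most $\kappa-1$, so the integer $m$ it encodes already satisfies $m<p^{\kappa-1}$, not merely $m<p^{\kappa}$. But this immediately refutes the lower bound you then set out to prove, since it gives $n_0\le m<p^{\kappa-1}$. The proposition as stated is false. A concrete counterexample is $P(t)=t+t^{-1}$ over $\mathbb{F}_2$: here $A_2(0)=1$ and $A_2(n)=0$ for all $n\ge 1$, so $n_0=1$, while the minimal LSD-first automaton has exactly two states (the start state with output $1$, looping on $0$, and an absorbing state with output $0$), giving $\kappa=2$ and $p^{\kappa-1}=2>1=n_0$.

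Your lower-bound plan therefore cannot be completed. In the counterexample the run on $n_0=1$ is $s\to z$ with $\ell=1$; it happens to exhaust $Q$, so you do get $\ell+1\ge\kappa$, but the ``sharpening to $\ell\ge\kappa$'' step is impossible---there is no further state to pick up. More generally, minimality of $\mathcal{A}$ constrains distinguishability of states, not which states are visited along the single run on $n_0$; states may be needed only to separate values at indices well beyond $n_0$ and need never appear on the path to the first zero, so the ``prune/quotient to a smaller automaton'' idea does not go through. The paper's own Case~1 is not a proof either: it argues only that short inputs ``may not'' reach the zero state, i.e.\ that a zero is not \emph{forced} below $p^{\kappa-1}$, which is very different from showing one cannot occur there. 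The salvageable content is just the upper bound $n_0<p^{\kappa}$ (indeed $n_0<p^{\kappa-1}$), and your pigeonhole argument already establishes that cleanly.
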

	
	\begin{proof}
		We proceed by cases on the number of digits of \( n \) in base-\( p \).
		
		\medskip
		
		\noindent \textbf{Case 1:} \( n < p^{\kappa(P,p)-1} \)
		
		\noindent
		The base-\( p \) representation of \( n \) has fewer than \( \kappa(P,p) \) digits. Since the automaton has \( \kappa(P,p) \) states and transitions depend on reading exactly \( \kappa(P,p) \) digits, inputs with fewer digits may not explore the full set of states. In particular, the zero-outputting state might not be reachable from short inputs. Thus, no guarantee exists that a zero will occur in this range.
		
		\medskip
		
		\noindent \textbf{Case 2:} \( p^{\kappa(P,p)-1} \leq n < p^{\kappa(P,p)} \)
		
		\noindent
		The base-\( p \) representation of \( n \) has exactly \( \kappa(P,p) \) digits. In this range, the automaton is capable of exploring all reachable states, because the input length matches the full depth of the automaton. If the automaton has a state that outputs \( 0 \), there must exist an input of this length that reaches it. Therefore, the first zero must occur somewhere in this interval.
		
		\medskip
		
		\noindent
		Combining both cases, the first zero \( n_0 \) must satisfy
		\[
		p^{\kappa(P,p)-1} \leq n_0 < p^{\kappa(P,p)}.
		\]
	\end{proof}

	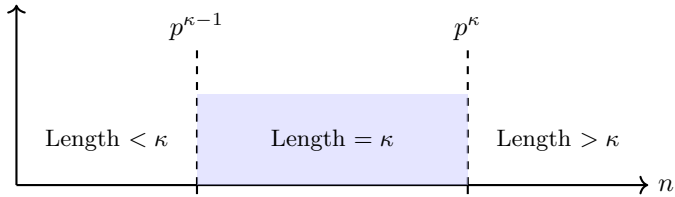
\begin{figure}[H]
		\centering
		\begin{tikzpicture}[scale=1.2, thick]
			\draw[->] (0,0) -- (7,0) node[right] {$n$};
			\draw[->] (0,0) -- (0,2) node[above] {};
			
			\draw[dashed] (2,0) -- (2,1.5) node[above] {$p^{\kappa-1}$};
			\draw[dashed] (5,0) -- (5,1.5) node[above] {$p^{\kappa}$};
			
			\fill[blue!20,opacity=0.5] (2,0) rectangle (5,1);
			
			\node at (1,0.5) {\small Length $< \kappa$};
			\node at (3.5,0.5) {\small Length $= \kappa$};
			\node at (6,0.5) {\small Length $> \kappa$};
			
			\foreach \x in {2,5}
			\draw (\x,0) -- (\x,-0.1);
			
		\end{tikzpicture}
		\caption{Range where the shortest zero must occur: inputs with base-\( p \) length exactly \(\kappa\) digits lie between \( p^{\kappa-1} \) and \( p^\kappa \).}
		\label{fig:kappa_bounds}
	\end{figure}

	\medskip
	
	\noindent
	\textbf{Remark.} In practice, the state set \( \kappa(P, p) \) can be computed explicitly using with the \texttt{ConstantTermSequences} Sage library \cite{kohen}. This usually tends to be dramatically lower than the theoretical worst-case bound.

	\section{Conclusion}
	
	Through computational experimentation, we have provided explicit counterexamples showing that the conjectured bound, \(p^{\deg(P)} \), does not universally hold for univariate Laurent polynomials. Our results demonstrate the necessity of revising or supplementing the original conjecture to account for additional structural factors. 
	
	We also established an automatic bound based on the number of states \(\kappa(P,p)\) in the minimal finite automaton computing the sequence \(A_p(n)\), namely:
	\[
	p^{\kappa(P,p)-1} \leq n_0 < p^{\kappa(P,p)}.
	\]
	This bound provides a more reliable and computable constraint on the first occurrence of zero in constant term sequences modulo primes. Future work will aim to further characterize the algebraic or combinatorial structures that govern the distribution of zeros in these sequences.

	\section*{Acknowledgments}
	I would like to thank Nadav Kohen for introducing me to the subject of his thesis and for his guidance during the semester for this project. It has been an absolute pleasure to learn so much. I'd also like to extend thanks to those who run the Undergraduate Research Lab, LEMMA, at Indiana University, for the opportunity get exposure to mathematical research this early in my academic career.
	
\bibliographystyle{plain}

\end{document}